\DeclareMathOperator{\ann}{ann}
\DeclareMathOperator{\depth}{depth}
\DeclareMathOperator{\Ext}{Ext}
\DeclareMathOperator{\Hom}{Hom}
\DeclareMathOperator{\Image}{Image}
\DeclareMathOperator{\injdim}{injdim}
\DeclareMathOperator{\rank}{rank}
\DeclareMathOperator{\Soc}{Soc}
\renewcommand{\ge}{\geqslant}
\theoremstyle{plain}
\newtheorem{theorem}{Theorem}[section]
\newtheorem{lemma}[theorem]{Lemma}
\newtheorem{proposition}[theorem]{Proposition}
\newtheorem{corollary}[theorem]{Corollary}
\newenvironment{customthm}[1]
  {\innercustomthm}
  {\endinnercustomthm}
\theoremstyle{definition}
\newtheorem{definition}[theorem]{Definition}
\newtheorem{example}[theorem]{Example}
\newtheorem{question}[theorem]{Question}
\theoremstyle{remark}
\newtheorem{remark}[theorem]{Remark}
\numberwithin{equation}{theorem}
\title[Characterizations of regular local rings]{Characterizations of regular local rings via syzygy modules of the residue field}
\author[D. Ghosh]{Dipankar Ghosh}
\address{Department of Mathematics, Indian Institute of Technology Bombay, Powai, Mumbai 400076, India}
\email{dipankar@math.iitb.ac.in}
\author[A. Gupta]{Anjan Gupta}
\email{anjan@math.iitb.ac.in}
\author[T. J. Puthenpurakal]{Tony J. Puthenpurakal}
\email{tputhen@math.iitb.ac.in}
\subjclass[2010]{Primary 13D02; Secondary 13D05, 13H05}
\keywords{Regular local rings; Syzygy and Cosyzygy modules; Semidualizing modules; Injective dimension}
\begin{document}

\begin{abstract}
 Let $R$ be a commutative Noetherian local ring with residue field $k$. We show that if a finite direct sum of syzygy modules of $k$
 surjects onto `a semidualizing module' or `a non-zero maximal Cohen-Macaulay module of finite injective dimension', then $R$ is
 regular. We also prove that $R$ is regular if and only if some syzygy module of $k$ has a non-zero direct summand of finite
 injective dimension.
\end{abstract}

\maketitle

\section{Introduction}\label{Introduction}
 Throughout this article, unless otherwise specified, all rings are assumed to be commutative Noetherian local rings, and all modules
 are assumed to be finitely generated.

 In this article, $R$ always denotes a  local ring with maximal ideal $\mathfrak{m}$ and residue field $k$.
 Let $\Omega_n^R(k)$ be the $n$th syzygy module of $k$. Dutta gave the following characterization of regular local rings in
 \cite[Corollary~1.3]{Dut89}.

\begin{theorem}[Dutta]\label{theorem: Dutta}
 $R$ is regular if and only if $\Omega_n^R(k)$ has a non-zero free direct summand for some integer $n \ge 0$.
\end{theorem}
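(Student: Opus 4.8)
The plan is to prove the two implications separately, treating the forward one as routine and concentrating on the converse. For the forward direction, suppose $R$ is regular of dimension $d$. Then $\pd_R k = d$ by Auslander--Buchsbaum--Serre, so the minimal free resolution $\cdots \to F_1 \to F_0 \to k \to 0$ terminates with $F_d \neq 0$ and $F_{d+1} = 0$. Hence the differential $F_d \to F_{d-1}$ is injective and $\Omega_d^R(k) \cong F_d$ is free and non-zero (for $d = 0$ one has $R = k$ and $\Omega_0^R(k) = R$). Thus $n = d$ works.

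For the converse, the goal is to deduce $\pd_R k < \infty$ from the hypothesis, whence $R$ is regular again by Auslander--Buchsbaum--Serre. First I would dispose of $n = 0$: a free summand of $\Omega_0^R(k) = k$ forces $R \cong k$, a field. So assume $n \ge 1$ and write $\Omega_n^R(k) = R \oplus N$. The first step is to convert the free summand into homological data. Fixing a minimal free resolution $(F_\bullet, d_\bullet)$ of $k$, I consider the defining short exact sequence
$$0 \to \Omega_n^R(k) \xrightarrow{\iota} F_{n-1} \to \Omega_{n-1}^R(k) \to 0,$$
where minimality gives $\iota(\Omega_n^R(k)) \subseteq \mathfrak m F_{n-1}$. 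Let $\rho \colon \Omega_n^R(k) \to R$ be the retraction onto the free summand and let $e$ generate that summand, so $\rho(e) = 1$ while $\iota(e) \in \mathfrak m F_{n-1}$. If $\rho$ extended to a map $F_{n-1} \to R$, evaluating on $\iota(e) \in \mathfrak m F_{n-1}$ would land in $\mathfrak m$, contradicting $\rho(e) = 1$. Hence $\rho$ does not extend, so pushing the displayed sequence out along $\rho$ yields a non-split extension, and therefore a non-zero class in $\Ext^1_R(\Omega_{n-1}^R(k), R) \cong \Ext^n_R(k, R)$ (dimension shifting). Dually, this class is represented by a split surjection $q \colon F_n \to R$ that is a cocycle in the dualized resolution $\Hom_R(F_\bullet, R)$, i.e.\ a unimodular element of $F_n^*$ killed by $d_{n+1}^*$; conversely any such unimodular cocycle splits a free summand off $\Omega_n^R(k)$. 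So the hypothesis is \emph{equivalent} to the presence of a unimodular cocycle at the $n$th spot of $\Hom_R(F_\bullet, R)$.

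The heart of the matter, and the step I expect to be the main obstacle, is to show that such a unimodular cocycle cannot occur at a spot $n \ge 1$ of an \emph{infinite} minimal resolution; equivalently, that in $\Omega_n^R(k) = R \oplus N$ one is forced to have $N = 0$, so that $\Omega_n^R(k)$ is free and $\pd_R k \le n$. Naive bookkeeping does not suffice: unravelling the splitting only produces the exact sequence $0 \to \Omega_{n+1}^R(k) \to R^{\beta_n - 1} \to N \to 0$, exhibiting $\Omega_{n+1}^R(k)$ as the first syzygy of $N$, and the resulting projective-dimension relations are perfectly consistent whether $\pd_R k$ is finite or infinite. Ruling out the middle-of-the-resolution free summand therefore requires genuine input. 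I would follow Dutta's route and bring in the New Intersection Theorem (equivalently, intersection-multiplicity / local-Chern-character estimates), which forces the relevant partial Euler characteristic to vanish and hence $N = 0$; the delicate point, as usual, concentrates in the Artinian case, where the dimension cannot be lowered. An alternative I would attempt is a dévissage: if $\depth R \ge 1$, cut by a suitably generic non-zerodivisor and track how free summands of syzygies of $k$ behave under $-\otimes_R R/xR$, reducing to the Artinian core. Either way, once $N = 0$ is established the resolution terminates and $R$ is regular.
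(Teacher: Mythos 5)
Your forward direction and your reformulation of the hypothesis as a unimodular cocycle in $\Hom_R(F_\bullet,R)$ are both correct, but the proposal stops exactly where the proof has to happen: you state that ruling out a free summand in the middle of an infinite minimal resolution ``requires genuine input'' and then defer that input either to the New Intersection Theorem or to an unspecified d\'evissage, without carrying out either. As written this is a genuine gap, not a proof; in particular the Artinian case, which you correctly identify as the delicate point, is left entirely open.

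For comparison, the paper's route is considerably more elementary. It proves the stronger statement that any surjection from a finite direct sum $\bigoplus_{n\in\Lambda}\left(\Omega_n^R(k)\right)^{j_n}$ onto a semidualizing module --- in particular onto $R$, which is exactly what a free summand provides --- forces regularity (Theorem~\ref{theorem: RLR and surjection onto star module} together with Remark~\ref{remark: Dutta and Takahashi's result}). The depth-zero case is disposed of by Lemma~\ref{lemma: socle, syzygy}: since the differentials of a minimal free resolution have entries in $\mathfrak m$, every element of $\Soc(R)$ annihilates $\Omega_n^R(k)$ for all $n$, hence annihilates any quotient of it; but a non-zero free module is faithful, so $\Soc(R)=0$, contradicting $\depth(R)=0$ unless $R=k$. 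No intersection theorem is needed. The inductive step is your proposed d\'evissage, but note the subtlety that makes it close: after passing to $\overline R=R/(x)$ with $x\in\mathfrak m\smallsetminus\mathfrak m^2$ regular, Proposition~\ref{proposition: Nagata} gives $\overline{\Omega_n^R(k)}\cong\Omega_n^{\overline R}(k)\oplus\Omega_{n-1}^{\overline R}(k)$, so a free summand of a single syzygy need not descend to a free summand of a single syzygy over $\overline R$. This is precisely why one strengthens the induction hypothesis from ``direct summand'' to ``surjection from a finite direct sum of syzygies,'' a condition that is stable under $-\otimes_R R/(x)$. If you pursue your d\'evissage, you need both this strengthening and the socle argument for the base case; with them, the appeal to the New Intersection Theorem disappears entirely.
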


 Later, Takahashi generalized Dutta's result by giving a characterization of regular local rings via the existence of a semidualizing
 direct summand of some syzygy module of the residue field. Let us recall the definition of a semidualizing module.

\begin{definition}[\cite{Gol84}]\label{definition: semidualizing module}
 An $R$-module $M$ is said to be a {\it semidualizing module} if the following hold:
 \begin{enumerate}[(i)]
  \item The natural homomorphism $R \to \Hom_R(M,M)$ is an isomorphism.
  \item $\Ext_R^i(M,M) = 0$ for all $i \ge 1$.
 \end{enumerate}
\end{definition}

 Note that $R$ itself is a semidualizing $R$-module. So the following theorem generalizes the above result of Dutta.

\begin{theorem}\cite[Theorem~4.3]{Tak06}\label{theorem: Takahashi; regular; semidualizing}
 $R$ is regular if and only if $\Omega_n^R(k)$ has a semidualizing direct summand for some integer $n \ge 0$.
\end{theorem}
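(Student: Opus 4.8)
I would prove the two implications separately, using Dutta's theorem (Theorem~\ref{theorem: Dutta}) as the engine. For the forward direction, suppose $R$ is regular of dimension $d$. Then the minimal free resolution of $k$ is the Koszul complex, so $\Omega_d^R(k)\cong R$; since $R$ is itself a semidualizing module, $\Omega_d^R(k)$ has a semidualizing direct summand. (Equivalently, Dutta's theorem already produces a nonzero free summand of some $\Omega_n^R(k)$, and any such summand contains a copy of the semidualizing module $R$.)

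For the converse, suppose $C$ is a semidualizing direct summand of $\Omega_n^R(k)$ for some $n\ge 0$. The case $n=0$ is immediate: then $C=k$ is semidualizing, so $R\cong\Hom_R(k,k)=k$ and $R$ is a field. Assume $n\ge 1$. The plan is to show that $C$ must be free; granting this, $\Omega_n^R(k)$ has a nonzero free direct summand (note $C\neq 0$, since $R\cong\Hom_R(C,C)$), and Dutta's theorem gives that $R$ is regular.

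The reduction of ``$C$ is free'' to a statement about projective dimension is the clean part. A semidualizing module satisfies $\depth_R C=\depth R$, so if I can show $\pd_R C<\infty$, then the Auslander--Buchsbaum formula forces $\pd_R C=\depth R-\depth_R C=0$, whence $C$ is projective, i.e.\ free. Thus it suffices to prove $\pd_R C<\infty$, equivalently $\Ext_R^i(C,k)=0$ for $i\gg 0$. The levers available are the two defining properties of a semidualizing module together with the fact that, by minimality of the resolution, the split inclusion $C\hookrightarrow\Omega_n^R(k)\hookrightarrow F_{n-1}$ lands inside $\mathfrak{m}F_{n-1}$. Applying $\Hom_R(-,C)$ to a minimal free resolution $G_{\bullet}\to C$ and using $\Ext_R^{i}(C,C)=0$ for $i\ge 1$ and $\Hom_R(C,C)=R$ yields an exact ``$C$-coresolution'' $0\to R\to C^{\beta_0}\to C^{\beta_1}\to\cdots$ of $R$, where $\beta_i$ is the $i$th Betti number of $C$; I would play this self-orthogonality against the syzygy structure of $C$ to force the $\beta_i$ to terminate.

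The main obstacle is exactly this last step. ``Semidualizing'' is a condition about the relative homological algebra governed by $\Hom_R(C,C)=R$ (reflexivity and self-orthogonality with respect to $C$), and a priori says nothing about the ordinary projective dimension of $C$ over $R$; indeed nonfree semidualizing modules (for instance a canonical module of a non-Gorenstein ring) exist in abundance. What must be exploited is that $C$ is not an arbitrary semidualizing module but one embedded as a summand of a \emph{minimal} syzygy of $k$. Concretely, were the inclusion $C\hookrightarrow F_{n-1}$ to admit a retraction, then, since its image lies in $\mathfrak{m}F_{n-1}$, Nakayama's lemma would collapse $C$ to $0$; hence the relevant obstruction class in $\Ext_R^n(k,C)$ is nonzero. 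Turning this qualitative nonvanishing, together with the identity $R\cong\Hom_R(C,C)$, into the quantitative finiteness $\pd_R C<\infty$---perhaps after reducing modulo a maximal regular sequence to the Artinian case, where the torsionless semidualizing module $C$ can be analyzed directly---is the crux of the argument.
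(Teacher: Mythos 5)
Your forward direction is fine, and the reduction ``if $\pd_R C<\infty$ then $C$ is free'' (via $\depth_R C=\depth R$ and Auslander--Buchsbaum) is correct. But the proof has a genuine gap exactly where you say it does: you never establish $\pd_R C<\infty$, and everything hinges on that. The observation that the split inclusion $C\hookrightarrow\Omega_n^R(k)\hookrightarrow F_{n-1}$ lands in $\mathfrak{m}F_{n-1}$, so that $C$ admits no retraction from $F_{n-1}$, is a statement about a single nonvanishing obstruction; it gives no control over the Betti numbers of $C$, and the self-orthogonality $\Ext_R^{i}(C,C)=0$, $\Hom_R(C,C)\cong R$ is satisfied by plenty of modules of infinite projective dimension (as you note, canonical modules of non-Gorenstein Cohen--Macaulay rings). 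Nothing in the proposal converts ``$C$ sits inside a minimal syzygy of $k$'' into finiteness of $\pd_R C$, so as written this is a plan, not a proof. (A posteriori the claim ``$C$ is free'' is equivalent to the theorem itself, since over a regular ring every semidualizing module is $R$; so you have reformulated the problem rather than reduced it.)

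The paper's route is entirely different and avoids projective dimension altogether. It only uses that the projection $\Omega_n^R(k)\to C$ is surjective, and runs induction on $\depth R$. In the base case $\depth R=0$: if $\mathfrak{m}\neq 0$ then $\Soc(R)\neq 0$, and Lemma~\ref{lemma: socle, syzygy} shows $\Soc(R)\subseteq\ann_R\left(\Omega_n^R(k)\right)\subseteq\ann_R(C)$; but $\Hom_R(C,C)\cong R$ forces $\ann_R(C)=0$, a contradiction, so $R=k$. For $\depth R\ge 1$ one cuts by $x\in\mathfrak{m}\smallsetminus\mathfrak{m}^2$ regular, uses that $C/xC$ is semidualizing over $R/(x)$ and Nagata's decomposition $\overline{\Omega_n^{R}(k)} \cong \Omega_n^{\overline{R}}(k)\oplus \Omega_{n-1}^{\overline{R}}(k)$ (Proposition~\ref{proposition: Nagata}) to descend, then concludes since $x\notin\mathfrak{m}^2$. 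This is why the paper can prove the stronger Theorem~\ref{theorem: RLR and surjection onto star module} (a surjection from a finite direct sum of syzygies onto $C$ suffices; no direct summand hypothesis is needed). If you want to salvage your approach, I would suggest abandoning the attempt to show $\pd_R C<\infty$ and instead extracting from the summand hypothesis only the surjection and the annihilator identity $\ann_R(C)=0$.
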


 If $R$ is a Cohen-Macaulay local ring with canonical module $\omega$, then $\omega$ is a semidualizing $R$-module. Therefore,
 as an application of the above theorem, Takahashi obtained the following:

\begin{corollary}\cite[Corollary~4.4]{Tak06}\label{corollary: Takahashi; regular; canonical module}
 Let $R$ be a Cohen-Macaulay local ring with canonical module $\omega$. Then $R$ is regular if and only if $\Omega_n^R(k)$ has a
 direct summand isomorphic to $\omega$ for some integer $n \ge 0$.
\end{corollary}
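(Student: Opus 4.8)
The plan is to derive this corollary directly from Theorem~\ref{theorem: Takahashi; regular; semidualizing}, using only the observation recorded just before the statement: since $R$ is a Cohen-Macaulay local ring with canonical module $\omega$, the module $\omega$ is semidualizing. Granting this, the reverse implication is immediate. Indeed, if $\Omega_n^R(k)$ has a direct summand isomorphic to $\omega$ for some $n \ge 0$, then $\Omega_n^R(k)$ has a semidualizing direct summand, so Theorem~\ref{theorem: Takahashi; regular; semidualizing} forces $R$ to be regular.

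For the forward implication, I would argue as follows. A regular local ring is Gorenstein, and the canonical module of a Gorenstein local ring is free of rank one, so $\omega \cong R$. Thus it suffices to exhibit a syzygy of $k$ with a non-zero free direct summand, and this is precisely the forward direction of Dutta's Theorem~\ref{theorem: Dutta}: some $\Omega_n^R(k)$ has a non-zero free summand $R^m$ with $m \ge 1$. Splitting off one copy of $R$ and using $\omega \cong R$ then gives a direct summand of $\Omega_n^R(k)$ isomorphic to $\omega$.

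I do not expect a genuine obstacle, since the statement is essentially the specialization of Theorem~\ref{theorem: Takahashi; regular; semidualizing} to the canonical module; the only points requiring care are the identification $\omega \cong R$ in the regular case and the transitivity of the direct-summand relation (a summand of a free summand of $\Omega_n^R(k)$ is again a summand of $\Omega_n^R(k)$). Both are routine, so the work lies entirely in assembling the two quoted theorems rather than in any new computation.
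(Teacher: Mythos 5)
Your proposal is correct and matches the paper's treatment: the paper presents this corollary precisely as an application of Theorem~\ref{theorem: Takahashi; regular; semidualizing} together with the fact that the canonical module of a Cohen-Macaulay local ring is semidualizing. Your explicit handling of the forward direction (regular $\Rightarrow$ Gorenstein $\Rightarrow$ $\omega \cong R$, then invoke the forward direction of Dutta's Theorem~\ref{theorem: Dutta}) is a routine and valid filling-in of a step the paper leaves implicit.
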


Now recall that the canonical module (if exists) over a Cohen-Macaulay local ring has finite injective dimension. Also it is
well-known that $R$ is regular if and only if $k$ has finite injective dimension. So, in this direction, a natural question arises
that ``if $\Omega_n^R(k)$ has a non-zero direct summand of finite injective dimension for some integer $n \ge 0$, then is the ring
$R$ regular?''. In the present study, we see that the above question has an affirmative answer.

Kaplansky conjectured that if some power of the maximal ideal of $R$ is non-zero and of finite projective dimension, then $R$ is
regular. In \cite[Theorem~1.1]{LV68}, Levin and Vasconcelos proved this conjecture. In fact, their result is even stronger:

 \begin{theorem}\label{theorem: Levin and Vasconcelos}
  If $M$ is an $R$-module such that $\mathfrak{m} M$ is non-zero and of finite projective dimension {\rm (}or of
  finite injective dimension{\rm )}, then $R$ is regular.
 \end{theorem}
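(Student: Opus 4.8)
The plan is to reduce both statements to the single assertion that the residue field $k$ has finite projective dimension: by the Auslander--Buchsbaum--Serre theorem this forces $R$ to be regular, and the same conclusion disposes of the injective hypothesis once we recall that $\injdim_R k<\infty$ already implies $R$ regular (if $\injdim_R k<\infty$ then $\Ext_R^i(k,k)=0$ for $i\gg0$, whence $\beta_i^R(k)=\dim_k\operatorname{Tor}_i^R(k,k)=0$ for $i\gg0$). Throughout, the central object is the short exact sequence
\begin{equation}
0 \to \mathfrak{m}M \to M \to k^{\mu} \to 0, \qquad \mu=\mu(M)\ge 1,
\end{equation}
arising from $M/\mathfrak{m}M\cong k^{\mu}$. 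First I would dispose of the easy case: if $M$ itself has finite projective (resp.\ injective) dimension, then the sequence shows that $k^{\mu}$, hence its summand $k$, has finite projective (resp.\ injective) dimension, and we are done. So the whole difficulty is the mixed situation $\pd_R M=\infty$ while $\pd_R(\mathfrak{m}M)=p<\infty$ (and its injective analogue).

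To attack the mixed case I would compare minimal free resolutions. Lift the surjection $\pi\colon M\to k^{\mu}$ to a chain map $\widetilde{\pi}\colon F_\bullet\to G_\bullet$ between the minimal free resolutions of $M$ and of $k^{\mu}$; since $\mu(M)=\mu=\mu(k^{\mu})$, the map $\widetilde{\pi}_0$ is an isomorphism. The mapping cone of $\widetilde{\pi}$ then furnishes a (generally non-minimal) free resolution $P_\bullet$ of $\mathfrak{m}M=\ker\pi$ with $P_i=F_i\oplus G_{i+1}$, whose differential has the block form $\left(\begin{smallmatrix}-d_i^{F}&0\\ \widetilde{\pi}_i & d_{i+1}^{G}\end{smallmatrix}\right)$; as $F_\bullet$ and $G_\bullet$ are minimal, the only entries outside $\mathfrak{m}$ sit in the blocks $\widetilde{\pi}_i$. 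Because $\pd_R(\mathfrak{m}M)=p$, the complex $P_\bullet\otimes_R k$ computes $\operatorname{Tor}_i^R(\mathfrak{m}M,k)$ and so is exact in all degrees $>p$. Writing out this exactness degree by degree is a short linear-algebra computation over $k$ that forces $\widetilde{\pi}_m\otimes_R k$ to be simultaneously injective and surjective for every $m\gg0$; by Nakayama's lemma each such $\widetilde{\pi}_m$ is then an isomorphism. Hence $\widetilde{\pi}$ is an isomorphism of complexes in all high degrees, which yields
\begin{equation}\label{eq:syzygy-iso}
\Omega_n^R(M)\cong \Omega_n^R(k)^{\mu}\ \text{ for all } n\gg 0,\qquad\text{in particular}\qquad \beta_i^R(M)=\mu\,\beta_i^R(k)\ \text{ for } i\gg 0.
\end{equation}

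It then remains to convert \eqref{eq:syzygy-iso} together with the hypothesis $\mathfrak{m}M\ne 0$ into the regularity of $R$, and this is where I expect the main obstacle to lie. The numerical content of \eqref{eq:syzygy-iso} alone is insufficient: the module $M=k^{\mu}$ already satisfies $\Omega_n^R(M)=\Omega_n^R(k)^{\mu}$ over any ring (with $\mathfrak{m}M=0$), so any successful argument must genuinely exploit that $\ker\pi=\mathfrak{m}M$ is nonzero while $\widetilde{\pi}$ becomes an isomorphism in high degrees --- that is, the ``defect'' measuring the failure of $\widetilde{\pi}$ to be a global isomorphism is nonzero yet, by finiteness of $\pd_R(\mathfrak m M)$, is concentrated in finitely many homological degrees. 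My plan for this step is to show that this concentration produces a nonzero free direct summand in some syzygy $\Omega_n^R(k)$ of the residue field, whereupon Dutta's Theorem~\ref{theorem: Dutta} forces $R$ to be regular; equivalently, one argues directly that $\operatorname{Tor}_i^R(k,\mathfrak{m}M)$ cannot vanish for $i\gg 0$ unless $\beta_i^R(k)$ itself vanishes, i.e.\ unless $\pd_R k<\infty$. A complexity count via \eqref{eq:syzygy-iso}, giving $\cx_R(M)=\cx_R(k)$, together with the Auslander--Buchsbaum equality $\pd_R(\mathfrak m M)+\depth(\mathfrak m M)=\depth R$, is the bookkeeping I would use to pin down the contradiction.

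Finally, the injective-dimension hypothesis I would treat by dualizing the whole scheme. Replacing $\operatorname{Tor}_i^R(k,-)$ by the Bass numbers $\mu^i(k,-)=\dim_k\Ext_R^i(k,-)$, the $\Ext_R(k,-)$ long exact sequence attached to the short exact sequence already gives $\mu^i(k,M)=\mu\,\beta_i^R(k)$ for $i\gg 0$ once $\injdim_R(\mathfrak m M)<\infty$; running the cone argument with minimal injective resolutions in place of minimal free ones should produce the injective analogue of \eqref{eq:syzygy-iso}, after which the same defect analysis applies. Alternatively, Bass's theorem shows that the mere existence of the nonzero module $\mathfrak{m}M$ of finite injective dimension forces $R$ to be Cohen--Macaulay, and a local-duality argument via a canonical module (after completion) then reduces the injective hypothesis to the projective one already settled. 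Making either reduction precise, and carrying out the defect computation of the previous paragraph, are the two points where the argument needs real care.
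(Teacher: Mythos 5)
This theorem is not proved in the paper at all --- it is quoted verbatim from Levin--Vasconcelos \cite[Theorem~1.1]{LV68} --- so there is no internal proof to compare against; your proposal has to stand on its own, and as written it does not. The part you do carry out is sound: from the exact sequence $0 \to \mathfrak{m}M \to M \to k^{\mu} \to 0$ and the minimality of $F_\bullet$ and $G_\bullet$, the mapping-cone complex tensored with $k$ has homology $H_i = \ker(\widetilde{\pi}_i \otimes k) \oplus \operatorname{coker}(\widetilde{\pi}_{i+1}\otimes k)$, so the vanishing of $\operatorname{Tor}_i^R(\mathfrak{m}M,k)$ for $i > p$ does force $\widetilde{\pi}_i$ to be an isomorphism for $i \ge p+2$, hence $\Omega_n^R(M) \cong \Omega_n^R(k)^{\mu}$ for $n \gg 0$. (A small slip: with $\widetilde{\pi}_0$ an isomorphism the correct resolution of $\mathfrak{m}M$ has $P_0 = G_1$, not $F_0 \oplus G_1$; this does not affect the asymptotic analysis.)

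The genuine gap is the step you yourself flag as the ``main obstacle,'' and none of the three finishes you sketch closes it. (i) ``Produce a nonzero free summand of some $\Omega_n^R(k)$ and invoke Dutta'': you give no mechanism for extracting such a summand from the fact that the defect of $\widetilde{\pi}$ is concentrated in degrees $\le p+1$. (ii) ``Argue that $\operatorname{Tor}_i^R(k,\mathfrak{m}M)$ cannot vanish for $i \gg 0$ unless $\beta_i^R(k) = 0$'': this is not a known lemma you can cite --- for a general nonzero module $N$ the vanishing of $\operatorname{Tor}_i^R(k,N)$ for $i \gg 0$ says nothing about $\beta_i^R(k)$ (take $N = R/(x)$ for a regular element $x$ over a singular ring) --- so any such statement must use that $N = \mathfrak{m}M$, which is exactly the theorem; as stated it is circular. (iii) The complexity count $\cx_R(M) = \cx_R(k)$ plus Auslander--Buchsbaum for $\mathfrak{m}M$ is true but yields no visible contradiction. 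The injective half has the same status: the Foxby-type reduction via a canonical module would convert $\mathfrak{m}M$ into $\Hom_R(\omega,\mathfrak{m}M)$, which is not of the form $\mathfrak{m}M'$, so the reduction to the projective case is not automatic, and the dual cone argument inherits the same unfinished final step. In short, the resolution-comparison scaffolding is correct but the decisive implication --- from $\mathfrak{m}M \ne 0$ of finite projective (or injective) dimension to $\pd_R k < \infty$ --- is still missing; the actual Levin--Vasconcelos argument supplies precisely this missing ingredient (and in the style of this paper one would instead run an induction on $\depth R$ using $\Soc(R)\cdot\mathfrak{m}M = 0$ in the depth-zero case, as in Lemma~\ref{lemma: socle, syzygy} and Theorem~\ref{theorem: RLR and surjection onto star module}).
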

 
 Motivated by this theorem, Martsinkovsky generalized Dutta's result in the following direction. {\it We denote a finite
 collection of non-negative integers by $\Lambda$}. In \cite[Proposition~7]{Mar96}, Martsinkovsky proved the following:
 
 \begin{theorem}\label{theorem: Martsinkovsky}
  Let $f : \bigoplus_{n \in \Lambda} \left( \Omega_n^R(k) \right)^{j_n} \longrightarrow L$\quad $(j_n \ge 1$ for each $n \in \Lambda)$
  be a surjective $R$-module homomorphism, where $L$ is non-zero and of finite projective dimension. Then $R$ is regular.
 \end{theorem}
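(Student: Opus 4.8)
The plan is to induct on $p = \pd_R L$, producing at each stage either a non-zero free direct summand of some $\Omega_n^R(k)$, so that Theorem~\ref{theorem: Dutta} applies, or a module $M$ with $\mathfrak{m}M$ non-zero of finite projective dimension, so that Theorem~\ref{theorem: Levin and Vasconcelos} applies. Throughout write $N = \bigoplus_{n \in \Lambda}(\Omega_n^R(k))^{j_n}$ and $K = \ker f$. First I would dispose of the base case $p = 0$, where $L$ is free and non-zero: then $f$ splits, so $R$ is a direct summand of $N$ via a section $\sigma$ and retraction $\pi$. Writing $\sigma = (\sigma_\alpha)$ and $\pi = (\pi_\alpha)$ coordinatewise over the indecomposable summands, we have $\sum_\alpha \pi_\alpha\sigma_\alpha = 1_R$, so since $R$ is local some $\pi_\alpha\sigma_\alpha$ is a unit; that single composite $R \to \Omega_n^R(k) \to R$ splits and exhibits $R$ as a direct summand of one $\Omega_n^R(k)$, whence $R$ is regular by Theorem~\ref{theorem: Dutta}. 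The same argument applies whenever $L$ acquires a free quotient, so I may assume throughout that $L$ has no free direct summand.

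The engine for $p \ge 1$ is a Levin--Vasconcelos construction. For $n \ge 1$ minimality gives $\Omega_n^R(k) \subseteq \mathfrak{m}F_{n-1}$, and assembling these yields an inclusion $N \hookrightarrow \mathfrak{m}\widetilde{F}$ with $\widetilde{F} = \bigoplus_n F_{n-1}^{j_n}$ free (the $\Omega_0 = k$ summands being handled separately). Set $M = \widetilde{F}/K$. Since $K \subseteq N \subseteq \mathfrak{m}\widetilde{F}$, one checks $\mathfrak{m}M = \mathfrak{m}\widetilde{F}/K$, that the image $N/K \cong L$ lies inside $\mathfrak{m}M$, and that $\mathfrak{m}M/L \cong \mathfrak{m}C$, where $C = \widetilde{F}/N \cong \bigoplus_n (\Omega_{n-1}^R(k))^{j_n}$ is again a direct sum of syzygies of $k$, of degrees lowered by one. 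In the decisive case where every $n \in \Lambda$ equals $1$, the module $C$ is a $k$-vector space, so $\mathfrak{m}C = 0$ and $\mathfrak{m}M = L$ is non-zero of finite projective dimension; Theorem~\ref{theorem: Levin and Vasconcelos} then forces $R$ to be regular.

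For the inductive step one must reduce an arbitrary $\Lambda$ to this degree-one situation, equivalently lower $p$. The available data are the short exact sequence $0 \to L \to M \to C \to 0$ just produced and the horseshoe comparison for $0 \to K \to N \to L \to 0$; applying the former to minimal resolutions gives, up to free summands, $K \cong N \oplus \Omega_1^R(L)$, where $\Omega_1^R(L)$ has projective dimension $p-1$. The aim is to convert this into a fresh surjection from a finite direct sum of syzygies of $k$ onto a non-zero module of projective dimension $p-1$ (ideally onto $\Omega_1^R(L)$ itself), after which the inductive hypothesis closes the argument.

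I expect this last reduction to be the main obstacle. Syzygy modules arise intrinsically as kernels, hence as \emph{sub}modules of ambient free modules, whereas the theorem presents them as the \emph{source} of a surjection; every free-cover manipulation I can see (pushout, pullback, horseshoe) reproduces the lower-degree syzygies in the wrong variance, sitting in a sequence whose complementary term has infinite projective dimension, so a naive degree induction stalls. Overcoming this, namely pinning down a genuine surjection onto a module of strictly smaller finite projective dimension, or else extracting a free summand directly from the relation $K \cong N \oplus \Omega_1^R(L)$, is the technical heart of the proof.
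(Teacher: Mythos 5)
The paper contains no proof of this statement: it is quoted from Martsinkovsky's paper \cite[Proposition~7]{Mar96}, and the closest in-house analogue is Theorem~\ref{theorem: RLR and surjection onto star module}, which the authors prove by a different mechanism (induction on $\depth R$ using Lemma~\ref{lemma: socle, syzygy} and Proposition~\ref{proposition: Nagata}). Judged on its own terms, your proposal has a genuine gap, which you yourself flag. You correctly settle the base case $\pd_R L=0$ (the coordinatewise splitting plus Theorem~\ref{theorem: Dutta} is fine) and the special case $\Lambda=\{1\}$, where your module $M=\widetilde F/K$ satisfies $\mathfrak m M=L$ and Theorem~\ref{theorem: Levin and Vasconcelos} applies; but the inductive step covering general $\Lambda$ and general $p$ is never carried out, and that step \emph{is} the theorem. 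The auxiliary claims meant to feed it are also shaky: the isomorphism $K\cong N\oplus\Omega_1^R(L)$ ``up to free summands'' does not follow from the horseshoe lemma (Schanuel-type cancellation needs the middle terms projective, and over an Artinian non-regular ring it is incompatible with $\ell(K)=\ell(N)-\ell(L)<\ell(N)$ once Krull--Schmidt is taken into account), and the summands with $n=0$ are not in fact ``handled separately'' --- killing the image of $k^{j_0}$ replaces $L$ by a quotient that need not have finite projective dimension, so this is a second open branch rather than a triviality.

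It is worth recording why the obvious repairs fail, which also explains why the authors cite \cite{Mar96} instead of deriving the statement from their own machinery. The depth-induction of Theorem~\ref{theorem: RLR and surjection onto star module} would apply verbatim if ``non-zero of finite projective dimension'' were a $(*)$-property; condition (2) does hold (Auslander--Buchsbaum forces $L$ free when $\depth R=0$), but condition (1) fails, because finite projective dimension need not descend along $R\to R/(x)$ when $x$ is a zerodivisor on $L$: over $R=k[[x,y]]/(y^2)$ the module $L=R/(x-y)$ has $\pd_R L=1$, yet $L/xL\cong k$ has infinite projective dimension over $R/(x)\cong k[y]/(y^2)$. Likewise your degree-lowering produces the lower syzygies $C=\widetilde F/N$ as a \emph{quotient} of $M$, not as the source of a surjection onto something of smaller finite projective dimension --- exactly the variance obstruction you identify. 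So the ingredients you assemble (Dutta's theorem, Levin--Vasconcelos, the $\mathfrak m M$ construction) are the right ones, and indeed they are Martsinkovsky's, but the argument welding them into a proof for arbitrary $\Lambda$ and arbitrary $p$ is missing.
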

 
 For a stronger result, we refer the reader to \cite[Corollary~9]{Avr96}. In this direction, we prove the following result which
 considerably strengthens Theorem~\ref{theorem: Takahashi; regular; semidualizing}. The proof presented here is very simple and
 elementary.
 
 \begin{customthm}{I}[See Corollary~\ref{corollary: RLR and surjection onto semidualizing}]
                     \label{Theorem I: surjection onto semidualizing}
  If a finite direct sum of syzygy modules of $k$ surjects onto a semidualizing $R$-module, then $R$ is regular.
 \end{customthm}
 
 Furthermore, we raise the following question:
 
 \begin{question}\label{question: surjection onto finite injdim}
  If a finite direct sum of syzygy modules of $k$ surjects onto a non-zero $R$-module of finite injective dimension,
  then is the ring $R$ regular?
 \end{question}
 
 In this article, we give a partial answer to this question as follows:
 
 \begin{customthm}{II}[See Corollary~\ref{corollary: RLR and surjection onto finite injdim}]
		       \label{Theorem II: surjection onto semidualizing}
  If a finite direct sum of syzygy modules of $k$ surjects onto a non-zero maximal Cohen-Macaulay $R$-module $L$ of finite
  injective dimension, then $R$ is regular.
 \end{customthm}

 If $R$ is a Cohen-Macaulay local ring with canonical module $\omega$, then one can take $L = \omega$ in the above theorem.
 
 We obtain one new characterization of regular local rings. It follows from Dutta's result (Theorem~\ref{theorem: Dutta}) that $A$
 is regular if and only if some syzygy module of $k$ has a non-zero direct summand of finite projective dimension. Here we prove the
 following counterpart for injective dimension.
 
 \begin{customthm}{III}[See Theorem~\ref{theorem: characterization of RLR, injdim}]
  $R$ is regular if and only if some syzygy module of $k$ has a non-zero direct summand of finite injective dimension.
 \end{customthm}

 Moreover, this result has a dual companion; see Corollary~\ref{corollary: dual result, cosyzygy}.
 
 Till now we have considered surjective homomorphisms from a finite direct sum of syzygy modules of $k$ to a `special module'.
 One may ask ``what happens if we consider injective homomorphisms from a `special module' to a finite direct sum of syzygy modules
 of $k$?". More precisely, if
 \[
  f : L \longrightarrow \bigoplus_{n \in \Lambda} {\left( \Omega_n^R(k) \right)}^{j_n}
 \]
 is an injective $R$-module homomorphism, where $L$ is non-zero and of finite projective dimension (or of finite injective dimension),
 then is the ring $R$ regular? In this situation, we show that $R$ is not necessarily a regular local
 ring; see Example~\ref{example: counter, injective map}.

\section{Preliminaries}\label{Preliminaries}
 
 In the present section, we give some preliminaries which we use in order to prove our main results. We start with the following
 lemma which gives a relation between the socle of the ring and the annihilator of the syzygy modules.
 
 \begin{lemma}\label{lemma: socle, syzygy}
  Let $M$ be an $R$-module. Then, for every integer $n \ge 1$, we have
  \[
   \Soc(R) \subseteq \ann_R\left(\Omega_n^R(M)\right).
  \]
  In particular, if $R \neq k$ {\rm (}i.e., if  $\mathfrak{m} \neq 0${\rm )}, then
  \[
   \Soc(R) \subseteq \ann_R\left(\Omega_n^R(k)\right) \mbox{ for all } n \ge 0.
  \]
 \end{lemma}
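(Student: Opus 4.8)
The plan is to read everything off a \emph{minimal} free resolution of $M$, using only the defining property of the socle. Recall that $\Soc(R) = (0 :_R \mathfrak{m})$, so that an element $s \in R$ lies in $\Soc(R)$ precisely when $s\mathfrak{m} = 0$. The crucial structural input will be minimality: if
\[
 \cdots \longrightarrow F_2 \xrightarrow{\ d_2\ } F_1 \xrightarrow{\ d_1\ } F_0 \longrightarrow M \longrightarrow 0
\]
is a minimal free resolution of $M$, then each differential satisfies $d_i(F_i) \subseteq \mathfrak{m} F_{i-1}$. Since $\Omega_n^R(M) = \Image(d_n)$ for $n \ge 1$, this yields the containment $\Omega_n^R(M) \subseteq \mathfrak{m} F_{n-1}$, which is the only geometric fact the argument really needs.

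With this in hand I would establish the main inclusion directly. Fix $s \in \Soc(R)$ and an arbitrary $x \in \Omega_n^R(M)$. Because $x \in \mathfrak{m} F_{n-1}$, I can write $x = \sum_j m_j e_j$ with $m_j \in \mathfrak{m}$ and $\{e_j\}$ a basis of $F_{n-1}$. Then $s x = \sum_j (s m_j) e_j = 0$, since $s m_j = 0$ for every $j$ by the defining property of $\Soc(R)$. As $x$ was arbitrary, $s \in \ann_R\!\left(\Omega_n^R(M)\right)$, and since $s$ was arbitrary this proves $\Soc(R) \subseteq \ann_R\!\left(\Omega_n^R(M)\right)$ for all $n \ge 1$.

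For the ``in particular'' statement I would specialize to $M = k$. The case $n \ge 1$ is then immediate from the general inclusion just proved. For $n = 0$ one uses the convention $\Omega_0^R(k) = k$, so that $\ann_R\!\left(\Omega_0^R(k)\right) = \mathfrak{m}$, and it only remains to observe $\Soc(R) \subseteq \mathfrak{m}$ under the hypothesis $\mathfrak{m} \neq 0$: were some $s \in \Soc(R)$ a unit, then $s\mathfrak{m} = 0$ would force $\mathfrak{m} = 0$, a contradiction. Hence every element of $\Soc(R)$ is a non-unit, giving the required containment.

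The computation itself is entirely routine; the one point that must not be overlooked is that $\Omega_n^R(M)$ is taken from a \emph{minimal} free resolution. Minimality is exactly what produces $\Omega_n^R(M) \subseteq \mathfrak{m} F_{n-1}$ and thereby forces the socle to act as zero. For a non-minimal resolution the syzygy could acquire a free direct summand, on which $\Soc(R)$ need not vanish, so flagging minimality is the only real subtlety in the proof.
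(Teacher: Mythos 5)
Your proof is correct and is essentially the same argument as the paper's: both rest on the minimality condition $d_n(F_n) \subseteq \mathfrak{m} F_{n-1}$, which places $\Omega_n^R(M)$ inside $\mathfrak{m} F_{n-1}$ where the socle acts as zero, and both dispose of the $n=0$ case for $k$ via $\ann_R(k) = \mathfrak{m} \supseteq \Soc(R)$. The only cosmetic difference is that the paper lifts $x$ to the free module $F_n$ and pushes $ax$ forward through the factored differential, whereas you work directly with $x$ as an element of $\mathfrak{m} F_{n-1}$; the content is identical.
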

 
 \begin{proof}
  Fix $n \ge 1$. If $\Omega_n^R(M) = 0$, then we are done. So we may assume $\Omega_n^R(M) \neq 0$. Consider the following commutative
  diagram in the minimal free resolution of $M$:
  \[
  \xymatrixrowsep{8mm} \xymatrixcolsep{6mm}
  \xymatrix{ \cdots \ar[r] & R^{b_n} \ar@{->>}[rd]_{f} \ar[rr]^{\delta} 	& 			& R^{b_{n-1}}\ar[r] & \cdots. \\
			    &                                     	& \Omega_n^R(M)\ar@{^{(}->}[ru]_g	}
  \]
  Let $a \in \Soc(R)$, i.e., $a\mathfrak{m} = 0$. Suppose $x \in \Omega_n^R(M)$. Since $f$ is surjective, there exists
  $y \in R^{b_n}$ such that $f(y) = x$. Note that $\delta(ay) = a\delta(y) = 0$ as
  $\delta\left(R^{b_n}\right) \subseteq \mathfrak{m} R^{b_{n-1}}$ and $a\mathfrak{m} = 0$.  Therefore
  $g(ax) = g(f(ay)) = \delta(ay) = 0$, which gives $ax = 0$ as $g$ is injective.
  Thus $\Soc(R) \subseteq \ann_R\left(\Omega_n^R(M)\right)$.
  
  For the last part, note that $\Soc(R) \subseteq \mathfrak{m} = \ann_R\left(\Omega_0^R(k)\right)$ if $\mathfrak{m} \neq 0$.
 \end{proof}
 
 Let us recall the following well-known result initially obtained by Nagata.
 
 \begin{proposition}\cite[Corollary~5.3]{Tak06}\label{proposition: Nagata}
  Let $x \in \mathfrak{m} \smallsetminus \mathfrak{m}^2$ be an $R$-regular element.
  Set $\overline{(-)} := (-) \otimes_R R/(x)$. Then
  \[ 
    \overline{\Omega_n^{R}(k)} \cong \Omega_n^{\overline{R}}(k)\oplus \Omega_{n-1}^{\overline{R}}(k)
    \quad \mbox{for every integer $n \ge 1$}.
  \]
 \end{proposition}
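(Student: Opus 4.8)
The plan is to prove the isomorphism by induction on $n$, comparing the minimal free resolutions of $k$ over $R$ and over $\overline{R}$ after applying the right-exact functor $\overline{(-)}=(-)\otimes_R\overline{R}$. Write $\Omega_n:=\Omega_n^R(k)$, and let $b_n$ and $\overline{b}_n$ be the minimal numbers of generators of $\Omega_n$ and of $\Omega_n^{\overline{R}}:=\Omega_n^{\overline{R}}(k)$, i.e.\ the Betti numbers of $k$ over $R$ and over $\overline{R}$. The engine of the argument is that for $n\ge 1$ the syzygy $\Omega_n$ is a submodule of a free module, hence torsion-free, so the $R$-regular element $x$ is a nonzerodivisor on it. Computing $\operatorname{Tor}$ from the Koszul resolution $0\to R\xrightarrow{\,x\,}R\to\overline{R}\to 0$ gives $\operatorname{Tor}_1^R(\Omega_n,\overline{R})=(0:_{\Omega_n}x)=0$ for $n\ge 1$, so tensoring the defining sequence $0\to\Omega_{n+1}\to R^{b_n}\to\Omega_n\to 0$ with $\overline{R}$ keeps it exact:
\begin{equation*}
 0\longrightarrow\overline{\Omega_{n+1}}\longrightarrow\overline{R}^{\,b_n}\longrightarrow\overline{\Omega_n}\longrightarrow 0.
 \tag{$\ast\ast$}
\end{equation*}
Since $-\otimes_R k$ factors through $\overline{(-)}$, one has $\mu(\overline{\Omega_n})=\dim_k(\Omega_n\otimes_R k)=b_n$, so the cover in $(\ast\ast)$ is a projective cover of $\overline{\Omega_n}$.

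For the base case $n=1$ I would compute $\overline{\Omega_1}=\mathfrak{m}/x\mathfrak{m}$ directly. Multiplication by $x$ identifies $R/\mathfrak{m}$ with $(x)/x\mathfrak{m}$, giving a short exact sequence $0\to k\xrightarrow{\,\overline{x}\,}\mathfrak{m}/x\mathfrak{m}\to\mathfrak{m}/(x)\to 0$, that is, $0\to\Omega_0^{\overline{R}}\to\overline{\Omega_1}\to\Omega_1^{\overline{R}}\to 0$. This is exactly where the hypothesis $x\notin\mathfrak{m}^2$ is used: choosing a $k$-basis of $\mathfrak{m}/\mathfrak{m}^2$ whose first vector is the class of $x$ produces an ``$x$-coordinate'' functional $\mathfrak{m}\to k$ that kills $\mathfrak{m}^2\supseteq x\mathfrak{m}$ and sends $x\mapsto 1$; it descends to a retraction $\mathfrak{m}/x\mathfrak{m}\to k$, so the sequence splits and $\overline{\Omega_1}\cong\Omega_1^{\overline{R}}\oplus\Omega_0^{\overline{R}}$.

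For the inductive step I assume $\overline{\Omega_n}\cong\Omega_n^{\overline{R}}\oplus\Omega_{n-1}^{\overline{R}}$. Taking the direct sum of the two minimal $\overline{R}$-presentations of $\Omega_n^{\overline{R}}$ and $\Omega_{n-1}^{\overline{R}}$ yields a minimal presentation
\begin{equation*}
 0\longrightarrow\Omega_{n+1}^{\overline{R}}\oplus\Omega_n^{\overline{R}}\longrightarrow\overline{R}^{\,\overline{b}_n+\overline{b}_{n-1}}\longrightarrow\Omega_n^{\overline{R}}\oplus\Omega_{n-1}^{\overline{R}}\longrightarrow 0.
 \tag{$\dagger$}
\end{equation*}
Applying $\mu$ to the induction hypothesis gives $b_n=\overline{b}_n+\overline{b}_{n-1}$, so after identifying the cokernel of $(\dagger)$ with $\overline{\Omega_n}$ via the hypothesis, both $(\ast\ast)$ and $(\dagger)$ are projective covers of $\overline{\Omega_n}$ with the identical free module $\overline{R}^{\,b_n}$ on top. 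Any two surjections from $\overline{R}^{\,b_n}$ realizing the projective cover of $\overline{\Omega_n}$ differ by an automorphism of $\overline{R}^{\,b_n}$, which carries one kernel isomorphically onto the other; hence $\overline{\Omega_{n+1}}\cong\Omega_{n+1}^{\overline{R}}\oplus\Omega_n^{\overline{R}}$, completing the induction.

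I expect the main obstacle to be not a single hard computation but keeping the minimality bookkeeping honest: one must verify that $(\ast\ast)$ is genuinely exact, which forces the separate first-syzygy analysis because $\operatorname{Tor}_1^R(k,\overline{R})=k\neq 0$ breaks exactness at $n=0$, and one must confirm that both presentations are truly minimal so that the projective-cover uniqueness applies. A more conceptual but notationally heavier alternative would be to lift the minimal $\overline{R}$-resolution of $k$ to $R$ and splice it with the Koszul complex on $x$, building the minimal $R$-resolution explicitly and reading off all syzygies at once; I prefer the inductive route above because it isolates the role of $x\notin\mathfrak{m}^2$ cleanly in the base case and avoids constructing a system of higher homotopies.
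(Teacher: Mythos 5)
The paper does not prove this proposition --- it is quoted from \cite[Corollary~5.3]{Tak06} (and attributed to Nagata) and used as a black box --- so there is no internal proof to compare against. Judged on its own, your argument is correct and complete, and it is essentially the classical proof of Nagata's lemma. The two pillars both hold: (i) for $n \ge 1$ the module $\Omega_n^R(k)$ sits inside a free module, so $(0:_{\Omega_n^R(k)}x) \subseteq (0:_{R^{b_{n-1}}}x) = 0$ and $\operatorname{Tor}_1^R(\Omega_n^R(k),\overline{R})=0$, which makes the reduced presentation $(\ast\ast)$ exact and (by the Betti-number count $\mu(\overline{\Omega_n})=b_n$ and Nakayama) a projective cover over $\overline{R}$; and (ii) the base case $0 \to (x)/x\mathfrak{m} \to \mathfrak{m}/x\mathfrak{m} \to \mathfrak{m}/(x) \to 0$ splits precisely because $x \notin \mathfrak{m}^2$ furnishes a $k$-linear (hence $R$-linear, since $R$ acts on $\mathfrak{m}/\mathfrak{m}^2$ through $k$) coordinate functional retracting the inclusion. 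The inductive step via uniqueness of projective covers over the same free module is sound: lifting one cover through the other gives an endomorphism of $\overline{R}^{\,b_n}$ that is surjective by Nakayama, hence an automorphism, hence carries kernel to kernel. One cosmetic quibble: ``torsion-free'' is not quite the right word for (i) unless $R$ is a domain; what you actually use, and correctly write down, is that $x$ is a nonzerodivisor on any submodule of a free module. Your closing remark about why $n=0$ must be excluded ($\operatorname{Tor}_1^R(k,\overline{R})=k\neq 0$) is also accurate and explains why the statement starts at $n\ge 1$.
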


 We notice two properties satisfied by semidualizing modules and maximal Cohen-Macaulay modules of finite injective dimension.
 
 \begin{definition}\label{definition: star property}
  Let $\mathcal{P}$ be a property of modules over  local rings. We say $\mathcal{P}$ is a $(*)$-property if $\mathcal{P}$
  satisfies the following:
  \begin{enumerate}[(1)]
   \item An $R$-module $M$ satisfies $\mathcal{P}$ implies that the $R/(x)$-module $M/xM$ satisfies $\mathcal{P}$, where $x \in R$
         is an $R$-regular element.
   \item An $R$-module $M$ satisfies $\mathcal{P}$ and $\depth(R) = 0$ together imply that $\ann_R(M) = 0$.
  \end{enumerate}
 \end{definition}
 
 Now we give a few examples of $(*)$-properties.
  
 \begin{example}\label{example: star property: semidualizing}
  The property $\mathcal{P}$ $:=$ `semidualizing modules over  local rings' is a $(*)$-property.
 \end{example}
 
 \begin{proof}
  Let $C$ be a semidualizing $R$-module. It is shown in \cite[p.~68]{Gol84} that $C/xC$ is a semidualizing $R/(x)$-module, where
  $x \in \mathfrak{m}$ is an $R$-regular element. Since $\Hom_R(C,C) \cong R$, we have $\ann_R(C) = 0$
  (without any restriction on $\depth(R)$).
 \end{proof}
 
 Here is another example of $(*)$-property.
 
 \begin{example}\label{example: star property: MCM, finite injdim}
  The property $\mathcal{P}$ $:=$ `non-zero maximal Cohen-Macaulay modules of finite injective dimension over Cohen-Macaulay local
  rings' is a $(*)$-property.
 \end{example}
 
 \begin{proof}
  Let $R$ be a Cohen-Macaulay local ring, and let $L$ be a non-zero maximal Cohen-Macaulay $R$-module of finite injective dimension.
  Suppose $x \in R$ is an $R$-regular element. Since $L$ is a maximal Cohen-Macaulay $R$-module, $x$ is $L$-regular as well. Therefore
  $L/xL$ is a non-zero maximal Cohen-Macaulay module of finite injective dimension over the Cohen-Macaulay local ring $R/(x)$
  (see \cite[3.1.15]{BH98}).
  
  Now further assume that $\depth(R) = 0$. Then $R$ is an Artinian local ring, and
  \[
   \injdim_R(L) = \depth(R) = 0.
  \] 
  Therefore, by \cite[3.2.8]{BH98}, we have that $L \cong E^r$, where $E$ is the injective hull of $k$ and $r = \rank_k\left(\Hom_R(k,L)\right)$.
  It is well-known that $\Hom_R(E, E) \cong R$ as $R$ is an Artinian local ring. Hence $\ann_R(L) = \ann_R(E) = 0$.
 \end{proof}
 
 \section{Main results}\label{Main results}
 
 Now we are in a position to prove our main results. First of all, we prove that if a finite direct sum of syzygy modules of the
 residue field surjects onto a non-zero module satisfying a $(*)$-property, then the ring is regular.
 
 \begin{theorem}\label{theorem: RLR and surjection onto star module}
  Assume that $\mathcal{P}$ is a $(*)$-property {\rm (}see {\rm Definition~\ref{definition: star property})}. Let
  \[
    f : \bigoplus_{n\in\Lambda} {\left( \Omega_n^R(k) \right)}^{j_n} \longrightarrow L
    \quad\quad\mbox{$(j_n \ge 1$ for each $n \in \Lambda)$}
  \]
  be a surjective $R$-module homomorphism, where $L$ {\rm(}$\neq 0${\rm)} satisfies $\mathcal{P}$. Then $R$ is regular.
 \end{theorem}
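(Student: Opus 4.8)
The plan is to induct on $d := \depth(R)$, writing $\overline{(-)} := (-) \otimes_R R/(x)$ for a suitably chosen element $x$ in the inductive step.

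For the base case $d = 0$, I would show that $R$ must be a field, hence regular. Suppose to the contrary that $\mathfrak{m} \neq 0$. Since $\depth(R) = 0$, the maximal ideal $\mathfrak{m}$ is an associated prime of $R$, so $\Soc(R) \neq 0$. By Lemma~\ref{lemma: socle, syzygy}, $\Soc(R) \subseteq \ann_R(\Omega_n^R(k))$ for every $n \ge 0$; hence $\Soc(R)$ annihilates the entire source $\bigoplus_{n\in\Lambda}(\Omega_n^R(k))^{j_n}$. Because $f$ is surjective, $\Soc(R)$ then annihilates $L$, i.e. $\Soc(R) \subseteq \ann_R(L)$. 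But $L$ satisfies $\mathcal{P}$ and $\depth(R) = 0$, so property~(2) of a $(*)$-property forces $\ann_R(L) = 0$, whence $\Soc(R) = 0$, a contradiction. Thus $\mathfrak{m} = 0$ and $R = k$ is regular.

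For the inductive step $d \ge 1$, the idea is to descend to $\overline{R} = R/(x)$ for a well-chosen $x$ and invoke the induction hypothesis. First I would produce a regular element $x \in \mathfrak{m} \setminus \mathfrak{m}^2$. Since $d \ge 1$ we have $\mathfrak{m} \notin \Ass(R)$ and, by Nakayama, $\mathfrak{m} \neq \mathfrak{m}^2$; applying prime avoidance to the potential cover $\mathfrak{m} \subseteq \mathfrak{m}^2 \cup \bigcup_{P \in \Ass(R)} P$---legitimate since only the single ideal $\mathfrak{m}^2$ fails to be prime---rules out containment in any one member, producing $x \in \mathfrak{m} \setminus \bigl(\mathfrak{m}^2 \cup \bigcup_{P} P\bigr)$, which is exactly a regular element lying outside $\mathfrak{m}^2$. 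Applying the right-exact functor $\overline{(-)}$ to $f$ gives a surjection $\overline{f} : \bigoplus_{n\in\Lambda}(\overline{\Omega_n^R(k)})^{j_n} \twoheadrightarrow \overline{L}$. By Proposition~\ref{proposition: Nagata}, for each $n \ge 1$ we have $\overline{\Omega_n^R(k)} \cong \Omega_n^{\overline R}(k) \oplus \Omega_{n-1}^{\overline R}(k)$, while for $n = 0$ one checks directly that $\overline{\Omega_0^R(k)} = \overline{k} = k = \Omega_0^{\overline R}(k)$. Regrouping, the source of $\overline{f}$ is again a finite direct sum $\bigoplus_{m\in\Lambda'}(\Omega_m^{\overline R}(k))^{j'_m}$ of syzygy modules of $k$ over $\overline{R}$, each with multiplicity $j'_m \ge 1$.

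It remains to verify the target behaves well and then to conclude. Since $L \neq 0$ and $x \in \mathfrak{m}$, Nakayama gives $\overline{L} = L/xL \neq 0$, and property~(1) of a $(*)$-property shows $\overline{L}$ satisfies $\mathcal{P}$ over $\overline{R}$. As $\depth(\overline{R}) = d - 1$, the induction hypothesis applies to $\overline{f}$ and yields that $\overline{R} = R/(x)$ is regular; because $x \in \mathfrak{m}\setminus\mathfrak{m}^2$ is regular, $R$ itself is then regular, completing the induction. I expect the main obstacle to lie in this reduction step rather than in the base case: one must secure a regular element $x$ inside $\mathfrak{m}\setminus\mathfrak{m}^2$ so that Nagata's decomposition is available, and then confirm that after tensoring down the source remains a genuine finite direct sum of syzygies of $k$ with positive multiplicities while the target $\overline{L}$ stays nonzero and retains $\mathcal{P}$. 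The base case, by contrast, is a short annihilator computation combining Lemma~\ref{lemma: socle, syzygy} with property~(2).
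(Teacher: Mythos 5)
Your proposal is correct and follows essentially the same route as the paper: induction on $\depth(R)$, with the base case handled by combining Lemma~\ref{lemma: socle, syzygy} with property~(2) of a $(*)$-property, and the inductive step by passing to $R/(x)$ for a regular $x \in \mathfrak{m}\smallsetminus\mathfrak{m}^2$ and invoking Proposition~\ref{proposition: Nagata} together with property~(1). The only differences are that you spell out details the paper leaves implicit (the prime-avoidance argument producing $x$, and the $n=0$ term, which the paper absorbs via the convention $\Omega_{-1}^{\overline{R}}(k) := 0$).
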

 
 \begin{proof}
  We prove the theorem by induction on $t := \depth(R)$. Let us first assume $t = 0$. If possible, let $R \neq k$, i.e.,
  $\mathfrak{m} \neq 0$. Since $\depth(R) = 0$, we have $\Soc(R) \neq 0$. But, by virtue of Lemma~\ref{lemma: socle, syzygy}, we obtain
  \begin{align*}
   \Soc(R) & \subseteq \bigcap_{n \in \Lambda} \ann_R\left(\Omega_n^R(k)\right) \\
           & = \ann_R\left( \bigoplus_{n \in \Lambda} {\left( \Omega_n^R(k) \right)}^{j_n} \right) \\
           & \subseteq \ann_R(L) \quad\quad
                    \mbox{[as $f : \bigoplus_{n\in\Lambda} {\left( \Omega_n^R(k) \right)}^{j_n} \longrightarrow L$ is surjective]}\\
           & = 0 \quad\quad\quad \mbox{[as $L$ satisfies $\mathcal{P}$ which is a $(*)$-property]},
  \end{align*}
  which is a contradiction. Therefore $R$ ($= k$) is a regular local ring.
  
  Now we assume $t \ge 1$. Suppose the theorem holds true for all such rings of depth smaller than $t$. Since
  $\depth(R) \ge 1$, there exists an element $x \in \mathfrak{m} \smallsetminus \mathfrak{m}^2$ which is $R$-regular. We set
  $\overline{(-)} := (-) \otimes R/(x)$. Clearly,
  \[
    \overline{f} : \bigoplus_{n\in\Lambda} {\left( \overline{\Omega_n^R(k)} \right)}^{j_n} \longrightarrow  \overline{L}
  \]
  is a surjective $\overline{R}$-module homomorphism, where the $\overline{R}$-module $\overline{L}$ {\rm(}$\neq 0${\rm)}
  satisfies $\mathcal{P}$ as $\mathcal{P}$ is a $(*)$-property. Since $x \in \mathfrak{m} \smallsetminus \mathfrak{m}^2$ is an $R$-regular
  element, by Proposition~\ref{proposition: Nagata}, we have
  \[ 
     \bigoplus_{n \in \Lambda} \left( \overline{\Omega_n^R(k)} \right)^{j_n} \cong
     \bigoplus_{n\in\Lambda} \left( \Omega_n^{\overline{R}}(k) \oplus \Omega_{n-1}^{\overline{R}}(k) \right)^{j_n}
     \quad \mbox{[by setting $\Omega_{-1}^{\overline{R}}(k) := 0$]}.
  \]
  Since $\depth(\overline{R}) = \depth(R) - 1$, by the induction hypothesis, $\overline{R}$ is a regular local ring, and hence $R$
  is a regular local ring as $x \in \mathfrak{m} \smallsetminus \mathfrak{m}^2$.
 \end{proof}
 
 As a few applications of the above theorem, we obtain the following necessary and sufficient conditions for a  local ring
 to be regular. 
 
 \begin{corollary}\label{corollary: RLR and surjection onto semidualizing}
  Let $f : \bigoplus_{n \in \Lambda} {\left( \Omega_n^R(k) \right)}^{j_n} \longrightarrow L$
  be a surjective $R$-module homomorphism, where $L$ is a semidualizing $R$-module. Then $R$ is regular.
 \end{corollary}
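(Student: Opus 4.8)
The plan is to deduce this immediately from Theorem~\ref{theorem: RLR and surjection onto star module} by taking $\mathcal{P}$ to be the property ``being a semidualizing module.'' The whole content of the corollary is packaged into verifying that this $\mathcal{P}$ is a $(*)$-property and that the target $L$ is non-zero, after which the theorem applies verbatim to the given surjection $f$.

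First I would invoke Example~\ref{example: star property: semidualizing}, which already establishes that ``semidualizing modules over local rings'' satisfies both conditions of Definition~\ref{definition: star property}: base change $C \mapsto C/xC$ along an $R$-regular element $x$ preserves the semidualizing property, and the isomorphism $R \cong \Hom_R(C,C)$ forces $\ann_R(C) = 0$ outright. So no new verification is needed there. Second, I would note that $L$ is automatically non-zero: if $L = 0$ then $\Hom_R(L,L) = 0$, contradicting the defining isomorphism $R \cong \Hom_R(L,L)$ since $R \neq 0$. Thus the hypothesis $L \neq 0$ of the theorem is satisfied.

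With these two observations in hand, the surjection $f : \bigoplus_{n\in\Lambda} \left( \Omega_n^R(k) \right)^{j_n} \longrightarrow L$ is exactly of the form required by Theorem~\ref{theorem: RLR and surjection onto star module} with $\mathcal{P} = $ the semidualizing property, and that theorem yields that $R$ is regular. There is essentially no obstacle here, since the real work—the induction on $\depth(R)$ using Lemma~\ref{lemma: socle, syzygy} in the base case and Proposition~\ref{proposition: Nagata} in the inductive step—was carried out once and for all in the proof of Theorem~\ref{theorem: RLR and surjection onto star module}. The only point requiring even a moment's thought is confirming $L \neq 0$, which I would state explicitly so the reader sees that the hypotheses of the theorem are genuinely met.

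\begin{proof}
 By Example~\ref{example: star property: semidualizing}, the property $\mathcal{P} := $ ``being a semidualizing module over a local ring'' is a $(*)$-property. Moreover, since $L$ is semidualizing, the natural homomorphism $R \to \Hom_R(L,L)$ is an isomorphism; as $R \neq 0$, this forces $L \neq 0$. Hence $L$ is a non-zero $R$-module satisfying the $(*)$-property $\mathcal{P}$, and the hypotheses of Theorem~\ref{theorem: RLR and surjection onto star module} are satisfied. Applying that theorem to the surjection $f$, we conclude that $R$ is regular.
\end{proof}
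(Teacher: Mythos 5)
Your proof is correct and follows exactly the paper's route: the paper also deduces the corollary directly from Theorem~\ref{theorem: RLR and surjection onto star module} together with Example~\ref{example: star property: semidualizing}. Your explicit check that $L \neq 0$ (via $R \cong \Hom_R(L,L)$) is a small but worthwhile addition that the paper leaves implicit.
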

 
 \begin{proof}
  The corollary follows from Theorem~\ref{theorem: RLR and surjection onto star module}
  and Example~\ref{example: star property: semidualizing}.
 \end{proof}

 \begin{remark}\label{remark: Dutta and Takahashi's result}
  We can recover Theorem~\ref{theorem: Takahashi; regular; semidualizing} (in particular, Theorem~\ref{theorem: Dutta} because $R$
  itself is a semidualizing $R$-module) as a consequence of Corollary~\ref{corollary: RLR and surjection onto semidualizing}.
  In fact the above result is even stronger than Theorem~\ref{theorem: Takahashi; regular; semidualizing}.
 \end{remark}
 
 Now we give a partial answer to Question~\ref{question: surjection onto finite injdim}.
 
 \begin{corollary}\label{corollary: RLR and surjection onto finite injdim}
  Let $(R,\mathfrak{m},k)$ be a Cohen-Macaulay local ring, and let
  \[ 
    f : \bigoplus_{n \in \Lambda} {\left( \Omega_n^R(k) \right)}^{j_n} \longrightarrow L
  \]
  be a surjective $R$-module homomorphism, where $L$ {\rm (}$\neq 0${\rm )} is a maximal Cohen-Macaulay $R$-module of finite injective
  dimension. Then $R$ is regular.
 \end{corollary}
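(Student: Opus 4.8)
The plan is to deduce this corollary as a direct instance of the general Theorem~\ref{theorem: RLR and surjection onto star module}, in exactly the manner that Corollary~\ref{corollary: RLR and surjection onto semidualizing} was obtained from the semidualizing case. Since all of the real content has been packaged into the notion of a $(*)$-property, it suffices to check that the class
\[
 \mathcal{P} := \text{`non-zero maximal Cohen-Macaulay modules of finite injective dimension over CM local rings'}
\]
satisfies the two axioms of Definition~\ref{definition: star property}; this is precisely the content of Example~\ref{example: star property: MCM, finite injdim}, so the proof amounts to assembling those pieces.

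For condition~(1), I would take an $R$-regular element $x$ and observe that, because $L$ is maximal Cohen-Macaulay, $x$ is automatically $L$-regular; hence $L/xL$ is again a non-zero maximal Cohen-Macaulay module over the Cohen-Macaulay ring $R/(x)$, and the finiteness of injective dimension is preserved under this base change by \cite[3.1.15]{BH98}. For condition~(2), I would specialise to the case $\depth(R)=0$, where $R$ is Artinian and Bass's equality forces $\injdim_R(L)=\depth(R)=0$; the structure theorem \cite[3.2.8]{BH98} then identifies $L$ with a direct sum of copies of the injective hull $E$ of $k$, and Matlis duality $\Hom_R(E,E)\cong R$ yields $\ann_R(L)=\ann_R(E)=0$.

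Having confirmed that $\mathcal{P}$ is a $(*)$-property, the conclusion follows by applying Theorem~\ref{theorem: RLR and surjection onto star module} to the given surjection $f$. It is worth noting that the inductive machinery there reduces the depth by killing a regular element $x\in\mathfrak{m}\smallsetminus\mathfrak{m}^2$, and since the quotient of a Cohen-Macaulay ring by a regular element is again Cohen-Macaulay, the hypotheses remain genuinely within the scope of $\mathcal{P}$ at every stage of the induction.

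The genuinely substantive step is condition~(2), the vanishing of the annihilator in the Artinian case; this is where the finite-injective-dimension hypothesis is used in an essential way, via Matlis duality. Everything else is either formal bookkeeping or a direct appeal to the already-established Theorem~\ref{theorem: RLR and surjection onto star module}, so I do not anticipate any serious obstacle beyond correctly invoking the Artinian structure theory.
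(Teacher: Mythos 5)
Your proposal is correct and follows exactly the paper's route: the paper proves this corollary by citing Theorem~\ref{theorem: RLR and surjection onto star module} together with Example~\ref{example: star property: MCM, finite injdim}, whose verification of the two $(*)$-axioms matches your argument (regular elements on maximal Cohen--Macaulay modules plus \cite[3.1.15]{BH98} for condition~(1), and the Artinian reduction via Bass's equality, \cite[3.2.8]{BH98}, and Matlis duality for condition~(2)). No gaps.
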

 
 \begin{proof}
  The corollary follows from Theorem~\ref{theorem: RLR and surjection onto star module}
  and Example~\ref{example: star property: MCM, finite injdim}.
 \end{proof}
 
 \begin{remark}\label{remark: partial answer for Artinian rings}
  It is clear from the above corollary that Question~\ref{question: surjection onto finite injdim} has an affirmative answer for
  Artinian local rings.
 \end{remark}

 Let $R$ be a Cohen-Macaulay local ring. Recall that a maximal Cohen-Macaulay $R$-module $\omega$ of type $1$ and of finite injective
 dimension is called the {\it canonical module} of $R$. It is well-known that the canonical module $\omega$ of $R$ is a semidualizing
 $R$-module; see \cite[3.3.10]{BH98}. So, both Corollary~\ref{corollary: RLR and surjection onto semidualizing}
 and Corollary~\ref{corollary: RLR and surjection onto finite injdim} yield the following result (independently) which
 strengthens Corollary~\ref{corollary: Takahashi; regular; canonical module}.
 
 \begin{corollary}\label{corollary: RLR and surjection onto omega}
  Let $(R,\mathfrak{m},k)$ be a Cohen-Macaulay local ring with canonical module $\omega$, and let
  $f : \bigoplus_{n \in \Lambda} \left( \Omega_n^R(k) \right)^{j_n} \longrightarrow \omega$ be a surjective $R$-module homomorphism.
  Then $R$ is regular.
 \end{corollary}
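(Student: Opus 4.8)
The plan is to recognize that this corollary is an immediate specialization of the two preceding corollaries, obtained by checking that the canonical module $\omega$ of a Cohen--Macaulay local ring is precisely the kind of ``special module'' to which both of them apply. In other words, no new induction or structural argument is needed here; the entire content lies in recalling the standard properties of $\omega$ and then quoting the appropriate result.

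First I would invoke the well-known fact (for instance \cite[3.3.10]{BH98}) that the canonical module $\omega$ of a Cohen--Macaulay local ring $R$ is a semidualizing $R$-module. Given the surjective homomorphism
\[
  f : \bigoplus_{n \in \Lambda} \left( \Omega_n^R(k) \right)^{j_n} \longrightarrow \omega,
\]
I would then apply Corollary~\ref{corollary: RLR and surjection onto semidualizing} directly with $L = \omega$ to conclude that $R$ is regular. This settles the claim in one line.

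Alternatively, and to emphasize the independence of the two routes, I would recall that by definition $\omega$ is a maximal Cohen--Macaulay $R$-module of type $1$ and of finite injective dimension, hence in particular a non-zero maximal Cohen--Macaulay module of finite injective dimension. Feeding the same surjection $f$ into Corollary~\ref{corollary: RLR and surjection onto finite injdim} (which applies since $R$ is Cohen--Macaulay) again yields that $R$ is regular. Either invocation suffices, so I would present the semidualizing route as the primary argument and note the injective-dimension route as an independent confirmation.

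I do not anticipate any genuine obstacle, since the real work has already been carried out in Theorem~\ref{theorem: RLR and surjection onto star module} and its two corollaries. The only point requiring care is the verification that $\omega$ satisfies the hypotheses of the corollary being quoted, and this is entirely standard: nonvanishing of $\omega$, its maximal Cohen--Macaulay property, its finite injective dimension, and its semidualizing property are all classical features of the canonical module. Consequently the proof reduces to a single sentence citing the relevant earlier result.
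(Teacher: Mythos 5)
Your proposal is correct and matches the paper exactly: the paper derives this corollary by observing that $\omega$ is a semidualizing module (citing \cite[3.3.10]{BH98}) and also a non-zero maximal Cohen--Macaulay module of finite injective dimension, so that both Corollary~\ref{corollary: RLR and surjection onto semidualizing} and Corollary~\ref{corollary: RLR and surjection onto finite injdim} apply independently. Nothing is missing.
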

 
 Here we obtain one new characterization of regular local rings. The following characterization is based on the existence of a
 non-zero direct summand with finite injective dimension of some syzygy module of the residue field.
 
\begin{theorem}\label{theorem: characterization of RLR, injdim}
 The following statements are equivalent: 
 \begin{enumerate}[{\rm (1)}]
  \item $R$ is a regular local ring.
  \item $\Omega_n^R(k)$ has a non-zero direct summand of finite injective dimension for some integer $n \ge 0$.
 \end{enumerate}
\end{theorem}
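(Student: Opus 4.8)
The forward implication is immediate from a fact recalled in the introduction: a regular local ring has $\injdim_R(k) < \infty$, and since $k = \Omega_0^R(k)$ is a direct summand of itself, statement (2) holds with $n = 0$.

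For the converse the plan is to prove, by induction on $\depth(R)$, the formally stronger statement: if a nonzero $R$-module $N$ of finite injective dimension is a direct summand of a finite direct sum $\bigoplus_{m \in \Lambda} (\Omega_m^R(k))^{j_m}$, then $R$ is regular. Allowing a finite sum of syzygies (rather than a single one) is what makes the induction self-sustaining, since reducing modulo a regular element replaces each $\Omega_m^R(k)$ by two syzygies via Proposition~\ref{proposition: Nagata}. Throughout, Bass's theorem guarantees that $R$ is Cohen--Macaulay (as $N \neq 0$ has finite injective dimension), so $\depth(R) = \dim(R)$; in particular $\depth(R) = 0$ forces $R$ to be Artinian. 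I would also arrange the residue field to be infinite by the faithfully flat base change $R \rightsquigarrow R[X]_{\mathfrak{m}R[X]}$, which leaves regularity, injective dimension, the syzygies of $k$, and direct summands unaffected.

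The decisive step is to split the ambient module as $k^{j_0} \oplus Y$, where $Y := \bigoplus_{m \ge 1} (\Omega_m^R(k))^{j_m}$. When $\depth(R) \ge 1$, the depth lemma yields $\depth(\Omega_m^R(k)) \ge 1$ for every $m \ge 1$, so $\Soc(Y) = 0$ and hence $\Soc(k^{j_0} \oplus Y) = k^{j_0}$. I would then distinguish two cases according to whether $N$ has nonzero socle. If $\Soc(N) \neq 0$ and $\depth(R) = 0$, then $R$ is Artinian and $N$ is injective and faithful, so $\ann_R(N) = 0$; since $N$ is a quotient of the ambient module, Lemma~\ref{lemma: socle, syzygy} gives $\Soc(R) \subseteq \ann_R(N) = 0$, forcing $\mathfrak{m} = 0$, i.e.\ $R = k$. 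If instead $\Soc(N) \neq 0$ and $\depth(R) \ge 1$, then $0 \neq \Soc(N) \subseteq \Soc(k^{j_0} \oplus Y) = k^{j_0}$; composing the inclusion $N \hookrightarrow k^{j_0} \oplus Y$ with a coordinate projection onto $k$ produces a surjection $N \to k$, and a preimage of a generator can be chosen inside $\Soc(N)$, giving an $R$-linear section and exhibiting $k$ as a direct summand of $N$. Hence $\injdim_R(k) \le \injdim_R(N) < \infty$ and $R$ is regular.

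In the remaining case $\Soc(N) = 0$ we have $\depth(N) \ge 1$, and since $R$ is Cohen--Macaulay also $\depth(R) \ge \depth(N) \ge 1$. Here I would choose $x \in \mathfrak{m} \smallsetminus \mathfrak{m}^2$ that is simultaneously $R$-regular and $N$-regular---available because $\mathfrak{m} \notin \Ass(R) \cup \Ass(N)$ and $k$ is infinite---and set $\overline{(-)} := (-) \otimes_R R/(x)$. Since $x$ is $N$-regular, $\overline{N}$ is nonzero with $\injdim_{\overline{R}}(\overline{N}) = \injdim_R(N) - 1 < \infty$ \cite{BH98}; since $x \in \mathfrak{m} \smallsetminus \mathfrak{m}^2$ is $R$-regular, Proposition~\ref{proposition: Nagata} realizes the reduction of the ambient module as a finite direct sum of syzygies of $k$ over $\overline{R}$, of which $\overline{N}$ is a direct summand. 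As $\depth(\overline{R}) = \depth(R) - 1$, the induction hypothesis makes $\overline{R}$ regular, whence $R$ is regular because $x \in \mathfrak{m} \smallsetminus \mathfrak{m}^2$. The genuinely delicate point is precisely the positive-depth-but-nonzero-socle situation: finite injective dimension is \emph{not} a $(*)$-property in the sense of Definition~\ref{definition: star property} (an $R$-regular $x$ need not be $N$-regular), so Theorem~\ref{theorem: RLR and surjection onto star module} does not apply and the maximal Cohen--Macaulay hypothesis of Corollary~\ref{corollary: RLR and surjection onto finite injdim} is unavailable; it is the socle-splitting trick that bypasses this, converting ``$N$ has socle'' directly into $\injdim_R(k) < \infty$ and thereby closing the induction.
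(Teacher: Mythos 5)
Your argument is correct, and it takes a genuinely different route from the paper's at the inductive step. The paper first completes $R$, invokes Bass's conjecture to get Cohen--Macaulayness, and then inducts on $\dim(R)$: for $n\ge 1$ the chosen $x$ is \emph{automatically} $L$-regular because $\Omega_n^R(k)$ embeds in a free module, and after reducing mod $x$ it passes to an indecomposable summand $L'$ of $\overline{L}$ and uses the Krull--Schmidt theorem over the complete ring $\overline{R}$ to place $L'$ inside a \emph{single} syzygy $\Omega_n^{\overline{R}}(k)$ or $\Omega_{n-1}^{\overline{R}}(k)$, so the induction never leaves the class ``summand of one syzygy''; the case $n=0$ is dispatched by $L=k$, and the base case $\dim(R)=0$ is delegated to Corollary~\ref{corollary: RLR and surjection onto finite injdim}. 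You instead strengthen the inductive statement to summands of finite direct sums of syzygies, which makes Krull--Schmidt (and hence completeness) unnecessary, but forces you to confront summands $N$ on which an $R$-regular element need not act regularly; your socle dichotomy handles exactly this, and the sub-case $\Soc(N)\neq 0$, $\depth(R)\ge 1$ --- where $\Soc(N)$ lands in the $k^{j_0}$-block, a socle element furnishes a section of a coordinate projection, and $k$ splits off $N$ so that $\injdim_R(k)<\infty$ --- has no counterpart in the paper. Your depth-zero case is essentially the annihilator computation of Theorem~\ref{theorem: RLR and surjection onto star module} combined with Example~\ref{example: star property: MCM, finite injdim}, so that part matches the paper in substance. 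One minor remark: the infinite-residue-field reduction is superfluous, since prime avoidance with the single non-prime ideal $\mathfrak{m}^2$ already produces $x\in\mathfrak{m}\smallsetminus\bigl(\mathfrak{m}^2\cup\bigcup\Ass(R)\cup\bigcup\Ass(N)\bigr)$ when $\mathfrak{m}\notin\Ass(R)\cup\Ass(N)$.
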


\begin{proof}
 (1) $\Rightarrow$ (2). If $R$ is regular, then $\Omega_0^R(k)$ $(= k)$ itself is a non-zero $R$-module of finite injective
 dimension. Hence the implication follows.
 
 (2) $\Rightarrow$ (1). Without loss of generality, we may assume that $R$ is complete. Existence of a non-zero (finitely generated)
 $R$-module of finite injective dimension ensures that $R$ is a Cohen-Macaulay local ring (see \cite[9.6.2 and 9.6.4(ii)]{BH98} and
 \cite{Rob87}). Therefore we may as well assume that $R$ is a Cohen-Macaulay complete local ring.
 
 Suppose that $L$ is a non-zero direct summand of $\Omega_n^R(k)$ with $\injdim_R(L)$ finite for some integer $n \ge 0$. We prove the
 implication by induction on $d := \dim(R)$. If $d = 0$, then the implication follows from
 Corollary~\ref{corollary: RLR and surjection onto finite injdim}.
 
 Now we assume $d \ge 1$. Suppose the implication holds true for all such rings of dimension smaller than $d$. Since $R$ is
 Cohen-Macaulay and $\dim(R) \ge 1$, there exists $x \in \mathfrak{m} \smallsetminus \mathfrak{m}^2$ which is $R$-regular.
 We set $\overline{(-)} := (-) \otimes_R R/(x)$. If $n = 0$, then the direct summand $L$ of $\Omega_0^R(k)$ $(= k)$ must be equal to
 $k$, and hence $\injdim_R(k)$ is finite, which gives $R$ is regular. Therefore we may assume $n \ge 1$. Hence $x$ is
 $\Omega_n^R(k)$-regular. Since $L$ is a direct summand of $\Omega_n^R(k)$, $x$ is $L$-regular as well. This gives
 $\injdim_{\overline{R}}(\overline{L})$ is finite. Now we fix an indecomposable direct summand $L'$ of $\overline{L}$. Then
 $\injdim_{\overline{R}}(L')$ is also finite. Note that the $\overline{R}$-module $\overline{L}$ is a direct summand
 of $\overline{\Omega_n^R(k)}$. Hence $L'$ is an indecomposable direct summand of $\overline{\Omega_n^R(k)}$.
 Since $x \in \mathfrak{m} \smallsetminus \mathfrak{m}^2$ is an $R$-regular element, by Proposition~\ref{proposition: Nagata}, we have
 \begin{equation*}
  \overline{\Omega_n^{R}(k)} \cong \Omega_n^{\overline{R}}(k)\oplus \Omega_{n-1}^{\overline{R}}(k).
 \end{equation*}
 It then follows from the uniqueness of Krull-Schmidt decomposition (\cite[Theorem~(21.35)]{Lam01}) that $L'$ is isomorphic
 to a direct summand of $\Omega_n^{\overline{R}}(k)$ or $\Omega_{n-1}^{\overline{R}}(k)$. Since $\dim(\overline{R}) = \dim(R) - 1$,
 by the induction hypothesis, $\overline{R}$ is a regular local ring, and hence $R$ is a regular local ring
 as $x \in \mathfrak{m} \smallsetminus \mathfrak{m}^2$.
\end{proof}
 
 Let $M$ be an $R$-module. Consider the augmented minimal injective resolution of $M$:
 \[
  0 \longrightarrow M \stackrel{d^{-1}}{\longrightarrow} I^0 \stackrel{d^0}{\longrightarrow} I^1 \stackrel{d^1}{\longrightarrow}
  I^2 \longrightarrow \cdots \longrightarrow I^{n-1} \stackrel{d^{n-1}}{\longrightarrow} I^n \longrightarrow \cdots.
 \]
 Recall that the $n$th {\it cosyzygy module} of $M$ is defined by
 \[
  \Omega_{-n}^R(M) := \Image(d^{n-1}) \quad\mbox{for all } n \ge 0.
 \]
 
 The following result is dual to Theorem~\ref{theorem: characterization of RLR, injdim} which gives another characterization of
 regular local rings via cosyzygy modules of the residue field.
 
 \begin{corollary}\label{corollary: dual result, cosyzygy}
  The following statements are equivalent: 
  \begin{enumerate}[{\rm (1)}]
   \item $R$ is a regular local ring.
   \item Cosyzygy module $\Omega_{-n}^R(k)$ has a non-zero finitely generated direct summand of finite projective
         dimension for some integer $n \ge 0$.
  \end{enumerate}
 \end{corollary}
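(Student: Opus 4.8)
The plan is to deduce this corollary from Theorem~\ref{theorem: characterization of RLR, injdim} by means of Matlis duality, which interchanges syzygies with cosyzygies and finite projective dimension with finite injective dimension. Write $E := E_R(k)$ for the injective hull of $k$ and $(-)^\vee := \Hom_R(-,E)$ for the Matlis duality functor. The implication $(1) \Rightarrow (2)$ requires no work: if $R$ is regular, then $\Omega_0^R(k) \cong k$ is itself a non-zero finitely generated module of finite projective dimension (equal to $\dim R$), so it serves as the desired direct summand with $n = 0$. All the content is in $(2) \Rightarrow (1)$, and this is where duality enters.

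First I would record the dual form of the minimal injective resolution of $k$. Since $E$ is Artinian over the Noetherian local ring $R$ (cf.\ \cite{BH98}), every term $I^n = E^{\mu_n}$ of that resolution satisfies $(I^n)^\vee \cong \widehat{R}^{\,\mu_n}$, while $k^\vee \cong k$; here $\mu_n = \dim_k \Ext_R^n(k,k)$ is the $n$th Bass number of $k$, which coincides with its $n$th Betti number. Because $(-)^\vee$ is exact, dualizing the minimal injective resolution of $k$ produces an acyclic complex of free $\widehat{R}$-modules augmented onto $k$ whose ranks are exactly the Betti numbers of $k$; such a resolution is forced to be minimal, hence is the minimal free resolution of $k$ over $\widehat{R}$. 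Dualizing the short exact sequences $0 \to \Omega_{-n}^R(k) \to I^n \to \Omega_{-(n+1)}^R(k) \to 0$ and comparing them with the syzygy sequences of this minimal free resolution then yields, by induction on $n$, the identification $\Omega_{-n}^R(k)^\vee \cong \Omega_n^{\widehat{R}}(k)$ for every $n \ge 0$.

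With this in hand the proof of $(2) \Rightarrow (1)$ would run as follows. Let $N$ be a non-zero finitely generated direct summand of $\Omega_{-n}^R(k)$ with $\pd_R(N)$ finite. As $\Omega_{-n}^R(k)$ is a submodule of the Artinian module $I^n$, it is Artinian, so its summand $N$ is simultaneously Noetherian and Artinian, i.e.\ of finite length. Applying the exact, faithful functor $(-)^\vee$ shows that $N^\vee$ is a non-zero direct summand of $\Omega_{-n}^R(k)^\vee \cong \Omega_n^{\widehat{R}}(k)$. Moreover a finite free resolution of $N$ over $\widehat{R}$ (which exists since $\pd_{\widehat{R}}(N) = \pd_R(N) < \infty$ by flat base change) dualizes to a finite injective resolution of $N^\vee$, so $\injdim_{\widehat{R}}(N^\vee)$ is finite. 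Thus $\Omega_n^{\widehat{R}}(k)$ has a non-zero direct summand of finite injective dimension, and Theorem~\ref{theorem: characterization of RLR, injdim} (applied over $\widehat{R}$) forces $\widehat{R}$, and therefore $R$, to be regular.

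The step I expect to be the most delicate is the careful matching of minimal resolutions under duality, together with the bookkeeping that keeps Matlis duality usable over a possibly non-complete ring. The identification of ranks with Bass/Betti numbers is exactly what guarantees minimality of the dualized complex, and restricting attention to the finite-length module $N$ (rather than the whole cosyzygy) is what lets us pass freely between $R$ and $\widehat{R}$ and use $N^{\vee\vee} \cong N$ without assuming completeness at the outset.
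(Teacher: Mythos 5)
Your proposal is correct and follows essentially the same route as the paper: dualize the minimal injective resolution of $k$ with respect to $E$ to identify $\Hom_R(\Omega_{-n}^R(k),E)$ with the $n$th syzygy of $k$, transfer the non-zero finite-projective-dimension summand to a non-zero finite-injective-dimension summand, and invoke Theorem~\ref{theorem: characterization of RLR, injdim}. The only (harmless) difference is bookkeeping: the paper assumes $R$ complete at the outset, whereas you keep $R$ arbitrary and pass to $\widehat{R}$ through the duality using that the summand has finite length.
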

 
 \begin{proof}
  (1) $\Rightarrow$ (2). If $R$ is regular, then $\Omega_0^R(k)$ $(= k)$ has finite projective dimension. Hence the implication
  follows.
  
  (2) $\Rightarrow$ (1). Without loss of generality, we may assume that $R$ is complete. Suppose $\Omega_{-n}^R(k) \cong P \oplus Q$
  for some integer $n \ge 0$, where $P$ is a non-zero finitely generated $R$-module of finite projective dimension. Consider the
  following part of the minimal injective resolution of $k$:
  \begin{equation}\label{theorem: dual result, cosyzygy: equation 1}
   0 \to k \to E \to E^{\mu_1} \to \cdots \to E^{\mu_{n-1}}
   \to \Omega_{-n}^R(k) \cong P \oplus Q \to 0,
  \end{equation}
  where $E$ is the injective hull of $k$. Dualizing \eqref{theorem: dual result, cosyzygy: equation 1} with respect to $E$ and
  using $\Hom_R(k,E) \cong k$ and $\Hom_R(E,E) \cong R$ (cf. \cite[3.2.12(a) and 3.2.13(a)]{BH98}), we get the following part of the
  minimal free resolution of $k$:
  \[ 0 \to \Hom_R(P,E) \oplus \Hom_R(Q,E) \cong \Omega_n^R(k) \to R^{\mu_{n-1}} \to \cdots
       \to R^{\mu_1} \to R \to k \to 0. \]
  Clearly, $\Hom_R(P,E)$ is non-zero and of finite injective dimension as $P$ is non-zero and of finite projective dimension.
  Therefore the implication follows from Theorem~\ref{theorem: characterization of RLR, injdim}.
 \end{proof}
 
 Now we give an example to ensure that the existence of an injective homomorphism from a `special module' to a finite direct sum of
 syzygy modules of the residue field does not necessarily imply that the ring is regular.
 
 \begin{example}\label{example: counter, injective map}
  Let $(R,\mathfrak{m},k)$ be a Gorenstein local domain of dimension $d$. Then $\Omega_d^R(k)$ is a maximal Cohen-Macaulay $R$-module;
  see, e.g., \cite[1.3.7]{BH98}. Since $R$ is Gorenstein, $\Omega_d^R(k)$ is a reflexive $R$-module (by \cite[3.3.10]{BH98}), and
  hence it is torsion-free. Then, by mapping $1$ to a non-zero element of $\Omega_d^R(k)$, we get an injective $R$-module homomorphism
  $f : R \to \Omega_d^R(k)$. Note that $\injdim_R(R)$ is finite. But a Gorenstein local domain need not be a regular local ring.
 \end{example}

\section*{Acknowledgements}
 The first author would like to thank NBHM, DAE, Govt. of India for providing financial support for this study.

\end{document}